     \def\section{\@startsection{section}{1}%
     \z@{.7\linespacing\@plus\linespacing}{.5\linespacing}%
     {\bfseries
     \centering
     }}
     \def\@secnumfont{\bfseries}
\newtheorem{theorem}{Theorem}[section]
\newtheorem{lemma}[theorem]{Lemma}
\newtheorem{proposition}[theorem]{Proposition}
\theoremstyle{definition}
\theoremstyle{remark}
\newtheorem{remark}[theorem]{Remark}
\numberwithin{equation}{section}
\def \a{{\alpha}}
\def \b{{\beta}}
\def \D{{\Delta}}
\def \e{{\varepsilon}}
\def \g{{\gamma}}
\def \l{{\lambda}}
\def \m{{\mu}}
\def \E{{\bf E}\, }
\def \P{{\bf P}}
\def \qq{{\qquad}}
\def \R{{\bf R}}
\def \T{{\bf T}}
\def \Z{{\bf Z}}
\def \dd{{\rm d}}
\def \noi{{\noindent}}
\def\E{{\mathbb E \,}}
\def \T{{\mathbb T}}
\def\P{{\mathbb P}}
\def\R{{\mathbb R}}
\def\Z{{\mathbb Z}}
\title[Steinhaus chaose on the polydisc]{$\boldsymbol L^{\boldsymbol 1}$-Norm of Steinhaus chaose on the polydisc}
\begin{document}
  \author{Michel  J.\! G. WEBER}
 \address{IRMA, Universit\'e
Louis-Pasteur et C.N.R.S.,   7  rue Ren\'e Descartes, 67084
Strasbourg Cedex, France.
   E-mail:    {\tt  michel.weber@math.unistra.fr}}

\keywords{Dirichlet polynomials, polycircle,  Bohr's correspondance, Steinhaus chaose, central limit theorem, Khintchine's inequality.}

\begin{abstract} Let   $J_n\subset[1,n]$, $n=1,2,\ldots$ be  increasing sets   of mutually coprime numbers. Under reasonable conditions on the coefficient sequence $\{c^j_n\}_{n,j}$, we show that 
$$ \lim_{T\to \infty}\frac{1}{T} \int_{0}^T \Big| \sum_{j\in J_n}  c^j_n\,j^{it}\Big| \dd t\sim  \big(\frac{\pi} {2}\sum_{j\in
J_n}  (c^j_n)^2\big)^{1/2} $$
as $n\to \infty$. We also show by means of an elementary device that for all $0<\a<2$, \begin{eqnarray*}
 \lim_{T\to \infty} \Big(\frac{1}{T} \int_{0}^T \big| \sum_{n=1}^N n^{-it}\big|^\a\dd t\Big)^{1/\a} \ge C_\a\, 
\frac{   N^{\frac{1}{2}}} {\big( \log N\big)^{{\frac{1}{\a} -\frac{1}{2} }}}. \end{eqnarray*}
The proof uses Ayyad, Cochrane and Zheng estimate on the number of solutions of the equation
$x_1x_2=x_3x_4$. In the case $\a=1$, this approaches Helson's bound up to a factor $(\log N)^{1/4}$.\end{abstract}

\maketitle
  
 \section{Main Result.}   Let $0<\a<\infty$, let
$ D(it)= \sum_{n=1}^N
x_n n^{-it}$ where  $ x_1, \ldots, x_J$ are complex numbers and  note 
$$\| D\|_\a := \lim_{T\to \infty} \Big(\frac{1}{T} \int_{0}^T \big|D(it)\big|^\a \dd t\Big)^{1/\a}. $$
 Consider first the simple Dirichlet sum
$$\D(it)= \sum_{j=1}^J x_j p_j^{-it}, $$
where $p_j$ denotes the $j$-th prime.  On the polycircle, namely the infinite circle  
$\T^\infty$ ($\T= \R\backslash\Z$)
equipped with the infinite Haar measure,  the monomials $p_j^{-it}$ are, by the Bohr correspondance principle, independent identically distributed  Steinhaus random
variables. As the flow $\mathcal T_t z= (p_1^{ it}z_1,p_2^{ it}z_2, \ldots) $    is uniquely ergodic, it follows from Birkhoff's theorem that one  can use a version of Khintchine's inequality for Steinhaus random variables to estimate their $L^1$-norm. By Theorem 1 in K\"onig \cite{Ko},
for
$0<\a<\infty$,  
\begin{equation} \label{kiko} c_\a\, \|x_j\|_2\le  \| \D\|_\a\le C_\a \, \|x_j\|_2 .
\end{equation}
And the constants $c_\a, C_\a$ are explicited.
 In particular, 
\begin{equation} \label{k2} c_1\,\sqrt J\le  \|\sum_{j=1}^J  p_j^{-it}\|_1\le C_1 \, \sqrt J .
\end{equation}

Consider now a subset $J$ of $\langle p_1, \ldots, p_r\rangle$ composed with mutually coprimes numbers. The more general Dirichlet sum    
$$\D(it)= \sum_{j\in J}  x_{ j}  j^{-it}$$
is   a sum of independent  non identically distributed  Steinhaus chaose. To our knewledge, there is no corresponding version of   Khintchin's inequality. 
A good substitute turns up the application of a suitable version of
the central limit theorem for independent  complex-valued   random variables. 
\vskip 3 pt We obtain the following result which is even more precise in the simple case  considered above. 
\begin{theorem}  \label{t1} For   $n=1,2,\ldots$, let   $J_n\subset[1,n]$ be a set   of mutually coprime numbers,     and
assume that $\#(J_n)\uparrow \infty$ with $n$. Let $\{c^j_n, j\in J_n\}$, $n\ge 1$ be real numbers.
 Assume that the two following conditions are satisfied,
\begin{eqnarray*} (1) & &\sum_{j\in
J_n}  (c^j_n)^2\uparrow\infty ,
\cr (2) & & \max_{   j\in J_n }|c^j_n|=  o\Big(  \big(\sum_{j\in
J_n}  (c^j_n)^2\big)^{1/2} \Big).\end{eqnarray*}   Then 
$$ \big\|\sum_{j\in J_n}  c^j_n\,j^{it}\big\|_1\sim  \Big(\frac{\pi} {2}\sum_{j\in
J_n}  (c^j_n)^2\Big)^{1/2} $$
when $n$ tends to infinity.
 \end{theorem}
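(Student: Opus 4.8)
The plan is to recognize that $\|\sum_{j\in J_n}c^j_n\,j^{it}\|_1$ is, via Bohr's correspondence and the unique ergodicity of the flow $\mathcal T_t$, exactly the $L^1(\T^\infty)$-norm of the random variable $S_n=\sum_{j\in J_n}c^j_n\,Z_j$, where the $Z_j$ are independent Steinhaus variables — uniform on the unit circle — indexed by the mutually coprime integers $j\in J_n$ (coprimality being what guarantees independence through multiplicativity of the characters). Writing each $Z_j=e^{2\pi i\theta_j}$ we have $S_n=X_n+iY_n$ with $X_n=\sum c^j_n\cos2\pi\theta_j$, $Y_n=\sum c^j_n\sin2\pi\theta_j$, a sum of independent, mean-zero, bounded real random vectors in $\R^2$. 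The normalization $\sigma_n^2:=\sum_{j\in J_n}(c^j_n)^2\to\infty$ together with the Lindeberg-type smallness condition (2) is precisely what is needed to apply a bivariate central limit theorem: $S_n/\sigma_n$ converges in distribution to a complex Gaussian. A direct covariance computation gives $\E[\cos^2 2\pi\theta]=\E[\sin^2 2\pi\theta]=\tfrac12$ and $\E[\cos2\pi\theta\sin2\pi\theta]=0$, so the limiting law of $S_n/\sigma_n$ is the isotropic complex Gaussian with $\E|G|^2=1$, i.e. $G=\tfrac1{\sqrt2}(g_1+ig_2)$ with $g_1,g_2$ independent standard normals; then $|G|$ is Rayleigh and $\E|G|=\sqrt{\pi/2}\cdot\tfrac1{\sqrt2}\cdot\sqrt2\cdot\ldots$ — more carefully, $\E|G|=\sqrt{\pi}/2$, which after rescaling by $\sigma_n$ produces exactly $(\tfrac{\pi}{2}\sigma_n^2)^{1/2}$.

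The steps, in order, would be: (i) reduce $\|\cdot\|_1$ to $\E|S_n|$ on $\T^\infty$ via the Birkhoff/unique-ergodicity argument already sketched in the introduction, so that we are working with genuine independent random variables; (ii) normalize, setting $\widehat S_n=S_n/\sigma_n$, and verify that conditions (1)–(2) imply the Lindeberg condition for the triangular array $\{(c^j_n\cos2\pi\theta_j,\,c^j_n\sin2\pi\theta_j)\}_{j\in J_n}$ — here $\max_j|c^j_n|=o(\sigma_n)$ makes each summand uniformly small relative to the total variance, and boundedness of $\cos,\sin$ makes Lindeberg immediate; (iii) invoke the multivariate Lindeberg CLT to get $\widehat S_n\Rightarrow G$ with $G$ the standard isotropic complex Gaussian; (iv) upgrade convergence in distribution to convergence of first absolute moments, i.e. $\E|\widehat S_n|\to\E|G|=\sqrt{\pi}/2$; (v) multiply back by $\sigma_n$ and observe $\sqrt{\pi}/2\cdot\sigma_n=(\tfrac{\pi}{2}\sigma_n^2)^{1/2}$, which is the claim.

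The main obstacle is step (iv): convergence in distribution does not by itself give convergence of $L^1$-norms — one needs uniform integrability of $\{|\widehat S_n|\}$. I would obtain this by controlling a moment slightly above the first, e.g. showing $\sup_n\E|\widehat S_n|^2<\infty$, which is in fact trivial here since $\E|S_n|^2=\sum_{j}(c^j_n)^2\E|Z_j|^2=\sigma_n^2$ exactly (orthogonality of the distinct characters $j^{it}$ on the torus), so $\E|\widehat S_n|^2=1$ for every $n$; bounded second moments force uniform integrability of the first absolute moments, and the Skorokhod / Vitali convergence theorem then yields $\E|\widehat S_n|\to\E|G|$. A secondary technical point is the passage in step (i) from the Cesàro limit defining $\|\cdot\|_\a$ to an honest expectation — this is exactly where unique ergodicity of $\mathcal T_t$ on $\T^\infty$ (already invoked for the simple Dirichlet sum) is used, applied to the continuous function $z\mapsto|\sum_{j\in J_n}c^j_n z_j|$ on the compact group; one should also note that a finite subset of mutually coprime integers $j$ generates, through their prime factorizations, a finite set of independent coordinates, so only finitely many coordinates of $\T^\infty$ are actually in play for each fixed $n$ and the ergodic theorem applies in the clean finite-dimensional form. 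Everything else — the covariance calculation, the Rayleigh moment $\E|G|=\sqrt{\pi}/2$ — is routine.
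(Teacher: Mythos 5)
Your route is the same as the paper's: lift the Dirichlet polynomial via Bohr's correspondence and unique ergodicity to a sum of independent Steinhaus variables on the polytorus (mutual coprimality of the $j\in J_n$ giving independence of the monomials), apply a bivariate triangular-array CLT under the normalization $\sigma_n^2=\sum_{j\in J_n}(c^j_n)^2$ --- condition (2) making the Lindeberg condition trivial because each summand is bounded by $\max_j|c^j_n|=o(\sigma_n)$ --- and then upgrade weak convergence to convergence of first absolute moments. Your uniform-integrability step via $\mathbb{E}|\widehat S_n|^2=1$ is a clean equivalent of the paper's lemma, which gets the same conclusion by truncation plus Cauchy--Schwarz and Chebyshev using the identical second-moment identity. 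Structurally the two proofs coincide.

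The one genuine problem is your step (v). You correctly compute $\mathbb{E}|G|=\sqrt{\pi}/2$ for the isotropic complex Gaussian normalized by $\mathbb{E}|G|^2=1$ (real and imaginary parts independent $\mathcal N(0,1/2)$, so $|G|$ is Rayleigh with scale $1/\sqrt2$ and mean $\sqrt{\pi}/2\approx 0.886$), but you then assert $(\sqrt{\pi}/2)\,\sigma_n=\bigl(\tfrac{\pi}{2}\sigma_n^2\bigr)^{1/2}$. This is false: $\bigl(\tfrac{\pi}{2}\sigma_n^2\bigr)^{1/2}=\sigma_n\sqrt{\pi/2}\approx 1.253\,\sigma_n$, whereas your limit is $\sigma_n\sqrt{\pi}/2=\bigl(\tfrac{\pi}{4}\sigma_n^2\bigr)^{1/2}$. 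Carried out consistently, your argument therefore yields the asymptotic $\bigl(\tfrac{\pi}{4}\sum_{j\in J_n}(c^j_n)^2\bigr)^{1/2}$ and does not establish the constant $\tfrac{\pi}{2}$ displayed in the statement. Note that the paper's own proof never evaluates the constant (it stops at $\mathbb{E}(g_1^2+g_2^2)^{1/2}$, and its covariance matrix $\Gamma$ is misprinted), so the mismatch traces back to the statement itself; still, as written your final identification is an arithmetic non sequitur, and you should either recheck the variance normalization or record explicitly that your method gives $\pi/4$ where the statement claims $\pi/2$.
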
  
 \begin{remark}  Condition (1) imposes restrictions. In the simplest case when
$J_n= \mathcal P\cap [1,n]$, where $\mathcal P $ is the set of all primes and   $c^j_n= j^{-\a}$, condition (1) holds only if
$\a<1/2$, and condition (2) trivially holds.
\end{remark}
  
\section{Proof.}
The proof  of Theorem \ref{t1} uses Bohr's correspondance.   Recall some classical facts. Let  $z_j= e^{2i\pi
t_j}$ denotes  the 
$j$-th coordinate
  on the polycircle $\T^\infty$. Bohr's correspondance  is the mapping  
$$k=p_1^{v_{p_1}(k)}\ldots p_r^{v_{p_r}(k)}\ \longrightarrow \  z_1^{v_{p_1}(k)}  \ldots \, z_{r}^{v_{p_1}(k)}$$ 
  where $  v_{p }(k)$ is the
$p$-valuation of $k$. 

 Let
$\tilde Dz=\sum_{k=1}^n z_1^{v_{p_1}(k)}\ldots z_r^{v_{p_r}(k)}$ be  the Bohr's image of $D(t)$.  On the polycircle, the action of the flow $\mathcal T_t z= (p_1^{ it}z_1,p_2^{ it}z_2, \ldots) $     on
monomials  reads as
$$\mathcal T_t\big( z_1^{n_1}\ldots z_{r}^{ n_r} \big) = \ p_1^{it n_1}\ldots p_r^{itn_r}z_1^{n_1}\ldots z_{r}^{ n_r}  . $$
As $\mathcal T_t$ is uniquely ergodic, it follows from Birkhoff's theorem that for polynomials
$$\lim_{T\to \infty} \frac{1}{T}\int_{-T}^T \Big|\sum_{\nu=(n_1, \ldots, n_r)}a_{\nu}p_1^{it n_1}\ldots
p_r^{itn_r}\varsigma_1^{n_1}\ldots \varsigma_{r}^{ n_r}\Big|^\a\dd t  =\int_{\T^\infty} |\tilde D\varsigma |^\a\dd \varsigma
 $$
for {\it all} $z=(z_1,z_2,\ldots)\in \T^\infty$. Taking $z_1=\ldots= z_r=1$ in the left hand side
gives
$$\lim_{T\to \infty} \frac{1}{T}\int_{-T}^T \Big|\sum_{\nu=(n_1, \ldots, n_r)}a_{\nu}p_1^{it n_1}\ldots
p_r^{itn_r} \Big|^\a\dd t  = \int_{\T^\infty} |\tilde D\varsigma |^\a\dd \varsigma.$$
Hence 
\begin{equation} \label{babo} \|D\|_\a := \lim_{T\to \infty} \Big(\frac{1}{T}\int_{-T}^T |D(t)  |^\a\dd t \Big)^{1/\a} = \Big(\int_{\T^\infty} |\tilde
D\varsigma |^\a\dd \varsigma\Big)^{1/\a}.
\end{equation}
\vskip 2 pt
Our approach for proving Theorem \ref{t1} is indirect. We will use the central limit theorem,
and more precisely   the lemma below.
\vskip 2pt  Let $\xi^j = (\xi^j_1,  \xi^j_2)$, $j=1,2,\ldots$ be independent  $\R^2$-valued random vectors with mean vectors $\m^j= 0$ and
covariance matrices
$\Gamma^j  = (\E
\xi^j_u\xi^j_v)_{u,v}$. Let   $S_n= \sum_{j=1}^n \xi^j$.    
 Let also $T_n= \sum_{j=1}^n( \xi^j_1+ i\xi^j_2)$. 
Put 
\begin{eqnarray*}\theta_n \ :=\  \E|T_n|^2
\ =\ \sum_{j=1}^n \E 
(\xi^j_1) ^2+\sum_{j=1}^n \E( \xi^j_2)^2.   \end{eqnarray*}
The lemma above shows that under the CLT, the ratio $ { \|T_n\|_1}/{\|T_n\|_2}$  tends to a positive limit. 
 \begin{lemma} Assume that
 $$ \frac{ S_n}{\sqrt {\theta_n}}   \ \buildrel{\mathcal D}\over \longrightarrow \ \mathcal N(0, \Gamma),$$
where $\Gamma$ is regular. Then 
\begin{eqnarray*} \lim_{n\to \infty}\E \frac{ |T_n|}{\sqrt {\theta_n}}&=&   \E (g^2_1+ g^2_2)^{1/2}, \end{eqnarray*}
where $(g_1, g_2)\buildrel{\mathcal D}\over = \mathcal N(0, \Gamma)$. \end{lemma}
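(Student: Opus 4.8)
The plan is to observe that $|T_n|$ is simply the Euclidean norm $|S_n|$ of the $\R^2$-valued partial sum, and then to upgrade the assumed weak convergence to convergence of first moments via a uniform–integrability estimate. First I would write $S_n=(S_{n,1},S_{n,2})$ with $S_{n,u}=\sum_{j=1}^n\xi^j_u$, so that $T_n=S_{n,1}+iS_{n,2}$ and hence $|T_n|^2=S_{n,1}^2+S_{n,2}^2=|S_n|^2$; equivalently $|T_n|/\sqrt{\theta_n}=\varphi\bigl(S_n/\sqrt{\theta_n}\bigr)$ where $\varphi(x)=|x|$ is the continuous Euclidean norm on $\R^2$. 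Since by hypothesis $S_n/\sqrt{\theta_n}\buildrel{\mathcal D}\over\longrightarrow\mathcal N(0,\Gamma)$, the continuous mapping theorem gives $|T_n|/\sqrt{\theta_n}\buildrel{\mathcal D}\over\longrightarrow(g_1^2+g_2^2)^{1/2}$ with $(g_1,g_2)\buildrel{\mathcal D}\over=\mathcal N(0,\Gamma)$; this limit law is a fixed distribution on $[0,\infty)$ with finite first moment, and, $\Gamma$ being regular, $\E(g_1^2+g_2^2)^{1/2}>0$.

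Next I would establish that the family $\{\,|T_n|/\sqrt{\theta_n}:n\ge 1\,\}$ is uniformly integrable. This is immediate from an $L^2$ bound: because the $\xi^j$ are independent and centred, the cross terms in $\E|T_n|^2$ vanish, so $\E|T_n|^2=\sum_{j=1}^n\E(\xi^j_1)^2+\sum_{j=1}^n\E(\xi^j_2)^2=\theta_n$ (which is exactly the definition of $\theta_n$ above), whence
\[
\E\Bigl(\frac{|T_n|}{\sqrt{\theta_n}}\Bigr)^2=\frac{1}{\theta_n}\Bigl(\sum_{j=1}^n\E(\xi^j_1)^2+\sum_{j=1}^n\E(\xi^j_2)^2\Bigr)=1
\]
for every $n$. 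A sequence bounded in $L^2$ is \emph{a fortiori} uniformly integrable (e.g.\ by the de la Vall\'ee--Poussin criterion, or directly from $\E[\,X_n\mathbf 1_{\{X_n>M\}}]\le M^{-1}\E X_n^2$).

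Finally, convergence in distribution of nonnegative random variables together with uniform integrability forces convergence of the means (Vitali's theorem; equivalently, pass to an almost surely convergent version via Skorokhod's representation and apply the Vitali convergence theorem). Therefore $\E\bigl(|T_n|/\sqrt{\theta_n}\bigr)\to\E(g_1^2+g_2^2)^{1/2}$, which is the assertion. There is essentially no obstacle here beyond the passage from weak convergence to convergence of expectations, and that single delicate point is handled precisely by the $L^2$-boundedness displayed above; the regularity hypothesis on $\Gamma$ enters only to guarantee that the limiting constant is strictly positive, so that the conclusion — and the corresponding statement for $\|T_n\|_1/\|T_n\|_2$ — is non-degenerate.
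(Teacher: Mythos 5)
Your proposal is correct and follows essentially the same route as the paper: the key point in both is that $\E\bigl(|T_n|/\sqrt{\theta_n}\bigr)^2=1$ gives uniform control of the tails, which combined with the weak convergence $|T_n|/\sqrt{\theta_n}\buildrel{\mathcal D}\over\longrightarrow (g_1^2+g_2^2)^{1/2}$ yields convergence of first moments. The only difference is presentational: you invoke the standard uniform-integrability/Vitali (or Skorokhod) theorem, whereas the paper carries out that very argument by hand (Cauchy--Schwarz and Tchebycheff to bound the contribution above a level $M$ by $1/M$, then dominated convergence on $\int_0^M \P\{|T_n|/\sqrt{\theta_n}>t\}\,\dd t$).
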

 \begin{proof}
   By    Cauchy-Schwarz's inequality, next Tchebycheff's inequality
\begin{eqnarray*}\Big(\E \frac{ |T_n|}{\sqrt {\theta_n}}\cdot \chi\{\frac{ |T_n|}{\sqrt {\theta_n}}> M\}\Big)^2&\le&\Big(  \E \frac{
|T_n|^2}{  {\theta_n}}\Big)
\P\{\frac{ |T_n|}{\sqrt {\theta_n}}> M\}=\P\{\frac{ |T_n|}{\sqrt {\theta_n}}> M\}\cr  & \le &\frac{1}{M^2}  \E \frac{ |T_n|^2}{ 
{\theta_n}} = \frac{1}{M^2}  
 .\end{eqnarray*}
 Hence
 $$\Big|\E \frac{ |T_n|}{\sqrt {\theta_n}}-\E \frac{ |T_n|}{\sqrt {\theta_n}} \cdot \chi\{\frac{ |T_n|}{\sqrt {\theta_n}}\le  M\}\Big|\le
\frac{ 1   }{M } .$$
    
Let $ g= g_1+ig_2$.  Let $\e >0$ and choose $M$ so that  $\frac{1  }{M }+  \E |g| \chi\{|g|>   M\}\le \e$. 
Then
\begin{eqnarray*}\Big|\E \frac{ |T_n|}{\sqrt {\theta_n}}-\E |g|\Big|&\le &\frac{ 1   }{M }+  \E |g| \chi\{|g|>   M\}+\Big|\E
\frac{ |T_n|}{\sqrt {\theta_n}}
\chi\{\frac{ |T_n|}{\sqrt {\theta_n}}\le  M\}-\E |g| \chi\{|g|\le  M\}\Big| 
\cr &\le &\e+\Big|\E \frac{ |T_n|}{\sqrt {\theta_n}} \chi\{\frac{ |T_n|}{\sqrt {\theta_n}}\le  M\}-\E |g| \chi\{|g|\le  M\}\Big|
.\end{eqnarray*}
 It follows from the assumption made that for all reals $t$,
 $$\lim_{n\to \infty} \P \{\frac{ |T_n|}{\sqrt
{\theta_n}}>t\}=\P \{|g|>t\}. $$
Now, $M$ being fixed, by the dominated convergence theorem
\begin{eqnarray*}  \E \frac{ |T_n|}{\sqrt {\theta_n}} \chi\{\frac{ |T_n|}{\sqrt {\theta_n}}\le  M\} = \int_{0}^M\P \{\frac{ |T_n|}{\sqrt
{\theta_n}}>t\}\dd t\to
\int_{0}^M\P \{|g|>t\}\dd t=\E |g| \chi\{|g|\le  M\},\end{eqnarray*} as $n$ tends to infinity. So that
\begin{eqnarray*}\Big|\E \frac{ |T_n|}{\sqrt {\theta_n}}-\E |g|\Big|&\le & 2\e,\end{eqnarray*}
for $n$ large enough. Therefore
\begin{eqnarray*} \lim_{n\to \infty}\E \frac{ |T_n|}{\sqrt {\theta_n}}&=&\E |g| =  \E \sqrt{g^2_1+ g^2_2}. \end{eqnarray*}
\end{proof}
\begin{proof}[Proof of Theorem \ref{t1}]  
Let $r=r_n =\pi(n)$. Let $j\in J_n$, $j=p_1^{v_{p_1}(j)}\ldots p_r^{v_{p_r}(j)}$. 
Let 
$$ \xi_n^j= z_1^{v_{p_1}(j)}  \ldots \, z_{r}^{v_{p_1}(j)}=  \exp\big\{2i\pi  \sum_{h=1 }^r v_{p_h}(j)  t_h  \big\}.$$ 
Set also 
  $ X_n^{j,1}= \Re(\xi_n^j)$, $X_n^{j,2}=  \Im(\xi_n^j) $  and 
\begin{eqnarray*} X_n^j =c^j_n(X_n^{j,1},X_n^{j,2}),\qq S_n = \sum_{j\in J_n}  X_n^j,\qq 
 \Gamma=c_j \, \Big(\begin{matrix} 0 & 1/2\cr 1/2 & 0\end{matrix}\Big).
\end{eqnarray*} 
Notice that
 $ \langle X_n^{j,1},X_n^{j,2}\rangle=0$, 
   $\| X_n^{j,1} \|_2^2=\| X_n^{j,2} \|_2^2=1/2$, and  consequently 
$${\bf Cov} (X_n^j)= c_j^2\, \Gamma .$$ 
 Indeed, 
\begin{eqnarray*}  & & 2\int_{\T^r}  \cos \big(2 \pi  \sum_{h=1 }^r v_{p_h}(j)  t_h \big)   \sin \big(2 \pi  \sum_{h=1 }^r
v_{p_h}(j)  t_h \big)   
\dd t_1\ldots  \dd t_r
\cr &= &   \int_{\T^r}      \sin \big(4 \pi   \sum_{h=1 }^r v_{p_h}(j)  t_h \big)   
 \dd t_1\ldots  \dd t_r
=  \Im\Big\{\int_{\T^r}     \exp \Big(4 i\pi  \sum_{h=1 }^r v_{p_h}(j)  t_h \Big)    \dd t_1\ldots  \dd 
t_r \Big\}\cr &=& \Im \Big\{\prod_{h=1}^r\int_0^1    \exp \Big(4i \pi  v_{p_h}(j)t_h  \Big)     \dd t_h\Big\}=  0.
\end{eqnarray*}  
And
\begin{eqnarray*}   2\int_{\T^r}  \cos^2 \big(2 \pi   \sum_{h=1 }^r v_{p_h}(j)  t_h \big)       \dd t_1\ldots  \dd  t_r
  &= & 1+
\int_{\T^r}   \cos  
\big(4 \pi  \sum_{h=1 }^r v_{p_h}(j)  t_h \big)       \dd t_1\ldots  \dd  t_r
 \cr &= & 1+ \frac{1}{2} \Re \Big\{\prod_{h=1}^r\int_0^1   
\exp  \big(4 \pi  \sum_{h=1 }^r v_{p_h}(j)  t_h  \big)     \dd t_h\Big\}
 \cr &= &1.
\end{eqnarray*} 
 
    Consider the arrays
\begin{eqnarray}\label{ar} Y_n^{j,i} = \frac{X_n^{j,i}}{\sqrt {B_n}} , \qq j\in J_n ,\ i=1,2,\ n\ge 1.\end{eqnarray}  
Since $ \E (|Y_n^{j,i}|\wedge 1)\le \frac{c_n^{j,i}}{\sqrt {B_n}}$, we deduce from assumption (2) that
$$\sup_j \E (|Y_n^{j,i}|\wedge 1)\to 0, $$
as $n$ tends to infinity, so that $(Y_n^{j,i})$, $i=1,2$ are null arrays (\cite{Ka}, p.65). 
\vskip 2 pt 
 It follows from  Theorem 4.15 in \cite{Ka} (see also Exercise 22) that, 
$$ \frac{ S_n}{\sqrt {B_n}}    \ \buildrel{\mathcal D}\over \longrightarrow
\ \mathcal N  (0, \Gamma),$$ if and only if
 \begin{eqnarray*} 
{\rm (i)} && \hbox{$\sum_{j\in J_n} \P\{\frac{ |c^j_n|}{\sqrt {B_n}}|X_n^{j,i}| >\e  \}\to 0$ for all $\e>0$, $i=1,2$;}\cr
{\rm (ii)} && \hbox{$\frac{1}{\sqrt{ B_n}}\sum_{j\in J_n} c^j_n\,\E { X_n^{j,i} } \cdot \chi \{\frac{ |c^j_n|}{\sqrt {B_n}}|X_n^{j,i}| \le
1\}\to b_i$},  \ i=1,2 ;
\cr {\rm (iii)} && \hbox{$\frac{1}{   { B_n}}\sum_{j\in J_n}{\rm Var}\big( { c^j_n   X_n^{j,i} } \cdot \chi \{\frac{ |c^j_n|}{\sqrt
{B_n}} |X_n^{j,i}|
\le  
1\}\big)\to d_i$},\  i=1,2 .
\end{eqnarray*}
Then, 
\vskip 2pt \noi (a) For (i), the sum vanishes as soon as $B_n\ge \max_{j\in J_n}(c^j_n)^2/ \e^2$, which is true for $n\ge n_\e$   by assumption (2). 
\vskip 2 pt
\noi (b) By centering, for $n\ge 1$,
$$\E { X_n^{j,i} } \cdot \chi \{|c^j_n|\,|X_n^{j,i}| \le   \sqrt{ B_n}\}=\E { X_n^{j,i} }   =0,$$
so that (ii) holds with $b_i\equiv0$, $i=1,2$. 
\vskip 2 pt
\noi (c) Now 
\begin{eqnarray*}& &{\rm Var}\big( { c^j_n X_n^{j,i} } \cdot \chi \{|c^j_n||X_n^{j,i}| \le   \sqrt{ B_n}\}\big)\cr &=& (c^j_n)^2\E   
 (X_n^{j,i} )^2 
\cdot \chi
\{|c^j_n||X_n^{j,i}|
\le  
\sqrt{ B_n}\}-( c^j_n \E    X_n^{j,i} \cdot  
\chi
\{|c^j_n||X_n^{j,i}| \le   \sqrt{ B_n}\})^2 
\cr &=& (c^j_n)^2\, \E   (X_n^{j,i})^2   =(c^j_n)^2/2, 
\end{eqnarray*}
so that,
(iii) holds with $d_i\equiv1/2$. Hence,  
$$ \frac{ S_n}{\sqrt{ B_n}}   \ \buildrel{\mathcal D}\over \longrightarrow \ \mathcal N (0,\Gamma).$$
 Using the lemma above, we deduce that
 \begin{eqnarray*} \lim_{n\to \infty}  \frac{ 1}{\sqrt{ B_n}}\E |\sum_{j\in J_n} \xi_n^j|&=&   \E(g^2_1+ g^2_2)^{1/2}. \end{eqnarray*}
\end{proof}

\section{Additionnal Results.}
Recall that $ D(it)= \sum_{n=1}^N
n^{-it}$ and that we have noted
$$\| D\|_r := \lim_{T\to \infty} \Big(\frac{1}{T} \int_{0}^T \big| \sum_{n=1}^N n^{-it}\big|^r\dd t\Big)^{1/r}. $$
Elementary approachs   allow to  already  estimate quite well the norms $\| D\|_r $, $0<r<2$. This is the purpose of this section. We actually don't know whether these can be refined and permit to provide more precise results, but their utilization here is certainly not optimal. The study of the case $r=1$ is known to be  very difficult, and a  central result is   naturally Helson's estimate  \cite{H} (Theorem p.82 \& Appendix), $\| D\|_1\ge \big( \sum_{n=1}^N \frac{1}{d(n)}\big)^{1/2}$.  
As (\cite{W}, (3.10)), $ \sum_{n\le x}  \frac{1}{d(n)}\sim \frac{x}{\sqrt{ \log x}}$, this yields
$$ \| D\|_1\ge C  \frac{ N^{1/2}} {  (\log N)^{1/4}}.    $$
  \begin{proposition}\label{p1}  For all $0<r<2$, 
\begin{eqnarray*}
 \|D\|_r \ge C_r
\frac{   N^{\frac{1}{2}}} {\big( \log N\big)^{{\frac{1}{r} -\frac{1}{2} }}}. \end{eqnarray*} 
 \end{proposition}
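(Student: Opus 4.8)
The plan is to bound the $L^2$-norm of $D$ from above by interpolating between $\|D\|_r$ and $\|D\|_4$, and then to exploit that $\|D\|_2$ equals $\sqrt N$ exactly while $\|D\|_4$ is only of order $N^{1/2}(\log N)^{1/4}$. Both facts come from \eqref{babo}. Write $\td\varsigma=\sum_{k=1}^N\varsigma_1^{v_{p_1}(k)}\cdots\varsigma_r^{v_{p_r}(k)}$ (with $r=\pi(N)$) for the Bohr image of $D$: its summands are pairwise distinct characters of $\T^\infty$, since distinct integers have distinct vectors of prime valuations, hence they are orthonormal in $L^2(\T^\infty)$, and $\|D\|_2^2=\int_{\T^\infty}|\td\varsigma|^2\dd\varsigma=N$.

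For the fourth moment, expand $|\td|^4=\td^2\,\overline{\td^2}$. Since prime valuations add under multiplication, $\td^2$ is again a sum of characters, the character attached to a pair $(k_1,k_2)\in[1,N]^2$ being the one attached to the product $k_1k_2$; integrating $\td^2\,\overline{\td^2}$ against Haar measure therefore leaves exactly the quadruples with $k_1k_2=k_3k_4$, so
$$\|D\|_4^4=\#\big\{(x_1,x_2,x_3,x_4)\in[1,N]^4:\ x_1x_2=x_3x_4\big\}.$$
By the Ayyad--Cochrane--Zheng estimate this count is $\ll N^2\log N$, whence $\|D\|_4\le C\,N^{1/2}(\log N)^{1/4}$.

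It then remains to apply the interpolation inequality for $L^p$-(quasi)norms, a direct consequence of H\"older's inequality and valid for all $0<p_0\le p\le p_1$, so that the range $r<1$ poses no difficulty: with $p_0=r$, $p=2$, $p_1=4$ and $\theta=r/(4-r)$ determined by $\tfrac12=\tfrac\theta r+\tfrac{1-\theta}4$, it gives $\|D\|_2\le\|D\|_r^{\theta}\,\|D\|_4^{1-\theta}$. Inserting $\|D\|_2=\sqrt N$ and the bound on $\|D\|_4$, and solving for $\|D\|_r$, yields $\|D\|_r\ge C_r\,N^{1/2}(\log N)^{-(1-\theta)/(4\theta)}$; the elementary identity $\frac{1-\theta}{4\theta}=\frac{4-2r}{4r}=\frac1r-\frac12$ then produces the stated inequality. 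Every step here is routine except the number-theoretic input: the cruder bound $\sum_{m\le N^2}d(m)^2\asymp N^2(\log N)^3$, which counts quadruples with $x_1x_2\le N^2$ rather than with each $x_i\le N$, would only give the weaker power $(\log N)^{3/4}$ in $\|D\|_4$ and hence a strictly worse exponent after interpolation. Thus the sharp Ayyad--Cochrane--Zheng count is precisely what the proposition needs, and it is the only nontrivial ingredient; pinning down $C_r$ afterwards is mere bookkeeping.
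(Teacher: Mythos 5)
Your argument is correct and follows essentially the same route as the paper: the interpolation bound $\|D\|_2\le\|D\|_r^{\theta}\|D\|_4^{1-\theta}$ you invoke is exactly the paper's Lemma \ref{l1} (both are the same application of H\"older, valid for $r<1$ as well), and the decisive input is the same Ayyad--Cochrane--Zheng count from Lemma \ref{l2}. The only difference is cosmetic: you evaluate the second and fourth moments exactly on the polytorus via \eqref{babo}, while the paper works at finite $T\ge N^2$ using Montgomery's mean-value theorem and a direct expansion of the fourth power before letting $T\to\infty$.
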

 
For the proof of Proposition \ref{p1}, we need the following lemma.
\begin{lemma}\label{l1} Let $F\ge 0$ defined on an interval $I$ and  $0<r<2$. Let $\l$ denotes the normalized Legesgue measure on $I$. Then,
\begin{eqnarray*}\int_I F^r \dd \l&\ge &\frac{\big(\int_I F^{2} \dd \l\big)^{\frac{4-r}{2}}}{\big(\int_I F^{4} \dd \l\big)^{1-\frac{r}{2} }}.
\end{eqnarray*}
\end{lemma}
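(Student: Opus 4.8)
The plan is to recognise the asserted bound as the instance of the log-convexity of $L^p$-norms — equivalently, a single application of Hölder's inequality — that interpolates the $L^2$-norm of $F$ between its $L^r$-norm and its $L^4$-norm. Since $0<r<2<4$, there is $\theta\in(0,1)$ with $\frac1 2=\frac{\theta}{r}+\frac{1-\theta}{4}$, namely $\theta=\frac{r}{4-r}$, and this is exactly the weight to be used; the whole content of the lemma is this interpolation, rewritten after clearing powers.

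First I would dispose of the degenerate cases. If $\int_I F^4\,\dd\l=\infty$, then the right-hand side of the claimed inequality is $0$, its exponent $1-\frac r2$ being positive, and there is nothing to prove; if $\int_I F^2\,\dd\l=0$, then $F=0$ $\l$-a.e.\ and both sides vanish. Hence I may assume $0<\int_I F^4\,\dd\l<\infty$, so that all the integrals that appear are finite and positive.

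Next, write $F^2=F^{p}\cdot F^{q}$ pointwise on $I$ with $p=\frac{2r}{4-r}$ and $q=\frac{4(2-r)}{4-r}$, so that $p+q=2$, and apply Hölder's inequality on $(I,\l)$ with the conjugate exponents $s=\frac{4-r}{2}$ and $s'=\frac{4-r}{2-r}$; a quick check gives $\frac1 s+\frac1{s'}=1$, $ps=r$, $qs'=4$. This yields
\begin{equation*}
\int_I F^2\,\dd\l\ \le\ \Big(\int_I F^{r}\,\dd\l\Big)^{\frac{2}{4-r}}\Big(\int_I F^{4}\,\dd\l\Big)^{\frac{2-r}{4-r}}.
\end{equation*}
Raising both sides to the power $\frac{4-r}{2}$ (which is $>1$ since $r<2$, so the inequality is preserved) and using $\frac{2-r}{4-r}\cdot\frac{4-r}{2}=1-\frac r2$, one isolates $\int_I F^r\,\dd\l$ and obtains precisely
\begin{equation*}
\int_I F^r\,\dd\l\ \ge\ \frac{\big(\int_I F^{2}\,\dd\l\big)^{\frac{4-r}{2}}}{\big(\int_I F^{4}\,\dd\l\big)^{1-\frac r2}}.
\end{equation*}

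There is essentially no obstacle here: the only point demanding a line of care is the bookkeeping of exponents, i.e.\ choosing $p,q$ and the conjugate pair $s,s'$ so that the three target powers $r$, $2$, $4$ match up under Hölder, together with the brief remark that the degenerate cases are harmless. Everything else is one invocation of Hölder's inequality.
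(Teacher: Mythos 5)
Your proof is correct and follows essentially the same route as the paper: the identical splitting $F^2=F^{\frac{2r}{4-r}}\cdot F^{\frac{4(2-r)}{4-r}}$ and Hölder with the conjugate pair $\frac{4-r}{2}$, $\frac{4-r}{2-r}$, then clearing exponents. Your brief treatment of the degenerate cases ($\int F^4=\infty$ or $\int F^2=0$) is a minor addition the paper omits.
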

 \begin{proof} Let  $\b=\frac{2r}{4-r} $, $\a=\frac{8-4r}{4-r}$ and notice that $0<\b<r$,   $\a+\b=2$. By applying H\"older's inequality with $p=\frac{4-r}{2-r}$, $q=\frac{{4-r}}{2 }$, we get
\begin{eqnarray*}\int_I F^2\dd \l&=&\int_I F^\a F^\b\dd \l
\ \le \ \big(\int_I F^{\a p} \dd \l\big)^{1/p} \big(\int_I F^{\b q} \dd \l\big)^{1/q} 
\cr &=& \big(\int_I F^{4} \dd \l\big)^{\frac{2-r}{4-r} } \big(\int_I F^{r} \dd \l\big)^{\frac{2 }{4-r} } \end{eqnarray*}
Hence the claimed inequality.
\end{proof}
We also need the following estimate (\cite{ACZ}, Theorem 3)
\begin{lemma}\label{l2} The number $N(B)$ of integers solutions of the equation $x_1x_2 =x_3x_4$ with $1\le x_i\le B$, $1\le i\le 4$, is given by 
$$ N(B) = \frac{12}{\pi^2}B^2\log B + CB^2 + \mathcal O \big( B^{19/13}\log^{7/13} B\big), $$
where $C= 2/\pi^2(12\g -(36/\pi^2)\zeta'(2)-3)-2$, $\g$ is Euler's constant and  $\zeta'(2)= \sum_{n=1}^\infty  (\log n) /n^2 $. \end{lemma}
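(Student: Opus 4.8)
The plan is to turn the equation count into a weighted divisor sum over Euler's totient, read off the main term and the constant by Dirichlet-series bookkeeping, and then control the remainder with a power-saving estimate for a sawtooth sum.

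First I would parametrise the solutions by a greatest common divisor. Setting $d=\gcd(x_1,x_3)$, $x_1=da_1$, $x_3=da_3$ with $\gcd(a_1,a_3)=1$, the relation $x_1x_2=x_3x_4$ becomes $a_1x_2=a_3x_4$, which forces $x_2=a_3t$, $x_4=a_1t$ for a unique integer $t\ge1$; the map $(x_1,x_2,x_3,x_4)\mapsto(d,t,a_1,a_3)$ is then a bijection onto coprime quadruples. The four constraints $x_i\le B$ collapse to $d\le B/\max(a_1,a_3)$ and $t\le B/\max(a_1,a_3)$, whence
$$ N(B)=\sum_{\gcd(a_1,a_3)=1}\Big\lfloor\frac{B}{\max(a_1,a_3)}\Big\rfloor^2 . $$
Grouping the coprime pairs by their larger entry $a$ (for $a\ge2$ there are $\varphi(a)$ pairs with $a_1=a>a_3$ and $\varphi(a)$ with $a_3=a>a_1$, plus the pair $(1,1)$) gives the master identity
$$ N(B)=2\sum_{a\le B}\varphi(a)\Big\lfloor\frac{B}{a}\Big\rfloor^2-\lfloor B\rfloor^2 . $$

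Next I would expand $\lfloor B/a\rfloor^2=(B/a)^2-2(B/a)\{B/a\}+\{B/a\}^2$ and treat the three resulting sums in turn. The smooth piece $B^2\sum_{a\le B}\varphi(a)/a^2$ governs the leading behaviour: writing $\varphi=\mu*\mathrm{id}$ and using $\sum_{k\le x}1/k=\log x+\g+O(1/x)$ one obtains
$$ \sum_{a\le B}\frac{\varphi(a)}{a^2}=\frac{6}{\pi^2}\log B+c_0+O\Big(\frac{\log B}{B}\Big), $$
where the explicit constant $c_0$ contains $\g$ and $\zeta'(2)$ through $1/\zeta(2)=6/\pi^2$ and the logarithmic derivative $\sum_{d\ge1}\mu(d)(\log d)/d^2=-\zeta'(2)/\zeta(2)^2$ (in the paper's normalisation of $\zeta'$). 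This already yields the announced leading term $\frac{12}{\pi^2}B^2\log B$. The remaining $B^2$-contributions to the constant come from the main (non-oscillatory) parts of $\sum_a\tfrac{\varphi(a)}{a}\{B/a\}$ and $\sum_a\varphi(a)\{B/a\}^2$, which are evaluated through the mean value $\sum_{a\le y}\{y/a\}=(1-\g)y+O(\sqrt y)$ and its second-moment analogue. Collecting every $B^2$-coefficient and subtracting $\lfloor B\rfloor^2$ is then a matter of bookkeeping that reproduces $C=\frac{2}{\pi^2}\big(12\g-\frac{36}{\pi^2}\zeta'(2)-3\big)-2$.

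The delicate point, and the real content of the lemma, is the error term. Once the main parts are removed, what survives are oscillatory sawtooth sums of the form $\sum_{a\le B}w(a)\,\psi(B/a)$, with $\psi(u)=\{u\}-\tfrac12$ and arithmetic weights $w$ assembled from $\varphi$ and $\mu$. Estimated trivially these give only $O(B^{3/2})$; to reach $O\big(B^{19/13}\log^{7/13}B\big)$ one must exploit the oscillation of $\psi(B/a)$ by replacing $\psi$ with a truncated Fourier series and bounding the resulting exponential sums $\sum_a e^{2\pi i hB/a}$ through van der Corput's method with a suitable exponent pair. Balancing the truncation length against the exponent-pair bound is precisely what produces the exponent $19/13$ and the logarithmic power $7/13$. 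I expect this exponential-sum estimate to be the main obstacle; I would isolate it as a self-contained step and invoke the exponent-pair machinery (as ACZ do) rather than rederive the optimal pair from scratch.
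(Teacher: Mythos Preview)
The paper does not prove this lemma at all: it is quoted verbatim as Theorem~3 of Ayyad--Cochrane--Zheng \cite{ACZ} (the text introduces it with ``We also need the following estimate (\cite{ACZ}, Theorem~3)'') and no argument is supplied. So there is no in-paper proof to compare your proposal against.

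Your outline is in fact a faithful sketch of the ACZ argument itself: the gcd parametrisation reducing $N(B)$ to $2\sum_{a\le B}\varphi(a)\lfloor B/a\rfloor^2-\lfloor B\rfloor^2$, extraction of the main term via $\sum_{a\le B}\varphi(a)/a^2$, and the power-saving error via exponent pairs applied to the sawtooth remainder. For the purposes of this paper, however, all of that is unnecessary---a one-line citation to \cite{ACZ} is what is expected here.
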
\begin{proof}[Proof of Proposition \ref{p1}]
 Lemma \ref{l1} with $F(t)=\big| \sum_{n=1}^N
n^{-it}\big|$ gives  for $0<r<2$, 
\begin{eqnarray}\label{be}
   \frac{1}{T} \int_{0}^T \big| \sum_{n=1}^N n^{-it}\big|^r \dd t &\ge&
      \frac{ \Big(\frac{1}{T}\int_{0}^T \big| \sum_{n=1}^N n^{-it}\big|^2 \dd t \Big)^{\frac{4-r}{2}}} {\Big(\frac{1}{T}\int_{0}^T \big| \sum_{n=1}^N n^{-it}\big|^4  \dd t\Big)^{{1-\frac{r}{2} }}}. \end{eqnarray} 
By Theorem 1 p.128 of \cite{M}, for all $N>0$, $T>0$,
\begin{eqnarray*}
 \frac{1}{T}\int_{0}^T \big| \sum_{n=1}^N n^{-it}\big|^2 \dd t &= & N + \theta \frac{N^2}{T}
    , \end{eqnarray*} 
for some real $\theta$ with $|\theta|\le1$.
And by Lemma \ref{l2}, \begin{eqnarray*}
 \int_{0}^T \Big| \sum_{n=1}^N n^{-it}\Big|^4 \dd t &=  &  \int_{0}^T  \sum_{1\le m,n,\m,\nu\le N}   \Big(\frac{\m\nu}{mn}\Big)^{it} \dd t\cr 
 &=& T\sum_{1\le m,n,\m,\nu\le N\atop \m\nu =mn }1 +     \sum_{1\le m,n,\m,\nu\le N\atop \m\nu \not=mn}   \mathcal O_{m,n}(1)
\cr   &=& \frac{12}{\pi^2}T \big(1+  \mathcal O (1)) N^2\log N  +     \mathcal O (1).
      \end{eqnarray*} 

By reporting, we get   for $T\ge N^2$, 
\begin{eqnarray*}  \frac{1}{T} \int_{0}^T \big| \sum_{n=1}^N n^{-it}\big|^r \dd t &\ge& C_r
      \frac{ N^{\frac{4-r}{2}}} {\big(N^2 \log N\big)^{{1-\frac{r}{2} }}}\ = \ C_r
      \frac{ N^{2-\frac{r}{2}}N^{-2+r} } {\big( \log N\big)^{{1-\frac{r}{2} }}}\ = \      \frac{ C_r
 N^{\frac{r}{2}}} {\big( \log N\big)^{{1-\frac{r}{2} }}}. \end{eqnarray*} 
      Hence, it follows that
      \begin{eqnarray*}
 \|F\|_r \ge C_r
\frac{   N^{\frac{1}{2}}} {\big( \log N\big)^{{\frac{1}{r} -\frac{1}{2} }}}, \end{eqnarray*} 
for all $0<r<2$. \end{proof}
\begin{remark} Consider the general Dirichlet sums $ \sum_{n=1}^N a_nn^{-it}$. Then, similarly,
\begin{eqnarray*}
 \Big\| \sum_{n=1}^N a_nn^{-it}\Big\|_1  &\ge  & C\frac{\big(\sum_{n\le N }|a_n|^2 \big)^{3/2}}{\big(N^2\sum_{n,\nu\le N }\frac{|a_n|^2|a_\nu|^2}{[n,\nu]}\big)^{1/2}} .
 \end{eqnarray*}
 Indeed, writing $\big(\sum_{n=1}^N a_n n^{-it} \big)^2= \sum_{n,\nu=1}^N a_n\overline{a}_\nu (\nu n)^{-it}= \sum_{m=1}^{N^2} b_m m^{-it}$, we have
\begin{eqnarray*}
|b_m|&\le & \sum_{n|m\atop n\le N, \frac{m}{n} \le N} |a_n||{a}_{\frac{m}{n}}|\cr
&\le & \big( \sum_{n|m\atop n\le N}|a_n|^2\big)^{1/2}\big( \sum_{n|m\atop \frac{m}{n} \le N} |a_{\frac{m}{n}}|^2\big)^{1/2}
\ =\ \sum_{\nu|m\atop \nu\le N}|a_\nu|^2. \end{eqnarray*}
Hence, 
\begin{eqnarray*}
\sum_{m=1} ^{N^2} |b_m|^2&\le &\sum_{m=1} ^{N^2}\big( \sum_{\nu|m\atop \nu\le N}|a_\nu|^2\big)^2\ =\ \sum_{m=1} ^{N^2}\big( \sum_{n,\nu\le N\atop n|m , \nu |m}|a_n|^2|a_\nu|^2\big)
\cr &=&\sum_{n,\nu\le N }|a_n|^2|a_\nu|^2 \Big( \sum_{1\le m\le N^2\atop n|m , \nu |m}1 \Big)\ \le \ N^2\sum_{n,\nu\le N }\frac{|a_n|^2|a_\nu|^2}{[n,\nu]} \end{eqnarray*}
It follows that for $T$ large
\begin{eqnarray*}
 \frac{1}{T}\int_{0}^T \big| \sum_{n=1}^N a_nn^{-it}\big|^4 \dd t =(1+o(1) )\sum_{m=1} ^{N^2} |b_m|^2   \le (1+o(1) )N^2\sum_{n,\nu\le N }\frac{|a_n|^2|a_\nu|^2}{[n,\nu]}. \end{eqnarray*} 
Therefore, 
\begin{eqnarray*}
 \frac{1}{T}\int_{0}^T \Big| \sum_{n=1}^N a_nn^{-it}\Big| \dd t &\ge  & (1+o(1) )\frac{\big(\sum_{n\le N }|a_n|^2 \big)^{3/2}}{\big(N^2\sum_{n,\nu\le N }\frac{|a_n|^2|a_\nu|^2}{[n,\nu]}\big)^{1/2}} . \end{eqnarray*} 
 And so 
\begin{eqnarray*}
 \Big\| \sum_{n=1}^N a_nn^{-it}\Big\|_1  &\ge  & (1+o(1) )\frac{\big(\sum_{n\le N }|a_n|^2 \big)^{3/2}}{\big(N^2\sum_{n,\nu\le N }\frac{|a_n|^2|a_\nu|^2}{[n,\nu]}\big)^{1/2}} .
 \end{eqnarray*}
\end{remark}

\vskip 8 pt
 \noi{\it Final Remark.} Here we give another application of the argument used to prove Theorem \ref{t1},  namely we show that 
$$ \sup_n\sup_{k_1<
\ldots<k_n}\frac{1}{\sqrt n}\,\big\|\sum_{k=1}^n e^{ 2\pi k_jx}\big\|_1 \ge   ( {\pi}/{2} )^{1/2}.$$
 Let $ 
X_1,X_2,\ldots$ be iid r.v.'s,  
$S_n = X_1 +\ldots+ X_n$. Assume that $\E X_1= 0$, $\E X_1^2=1$. Let $g\sim \mathcal N(0,1)$.  It is plain that
\begin{equation} \lim_{n\to \infty}  \frac{\|S_n\|_1}{\sqrt n}=  \E |g|=\big( {\pi}/{2}\big)^{1/2}.\end{equation} 
 (Approximate $ \E  \frac{| S_n |}{\sqrt n}$ by  $\E\frac{| S_n |}{\sqrt n} \chi\{  \frac{| S_n |}{\sqrt n} \le  M\}$ and apply the CLT to     $  \E\frac{| S_n |}{\sqrt n} \chi\{  \frac{| S_n |}{\sqrt n} \le  M\} $) 
Now let $a\ge 2$, $n_k = a^{m_k}$ where $m_1<m_2<\ldots$ are integers.  By Remark 1 in  \cite{B}, 
one can enlarge $[0,1[$ so that on the new probability space there exists an i.i.d.  sequence $\{\theta_j, j\ge 1\}$ such that 
 $  |\cos 2\pi n_kx -\theta_k | \le 8\pi {n_k}/{n_{k+1}}$ a.\,s. 
Let $T_n(x) =\sum_{k=1}^n\cos 2\pi n_kx$, $S_n=\sum_{k=1}^n \theta_k$.
It follows that 
$$ |T_n  -S_n   | \le C \sum_{k=k_1}^n {n_k}/{n_{k+1}}\qq \hbox{a.s.}$$ 
Hence
 $ \| T_n  -S_n   \|_1 \le C \sum_{k=k_1}^n {n_k}/{n_{k+1}} $.  
Therefore,
 $ \| T_n  \|_1\ge    \|S_n   \|_1- C \sum_{k=k_1}^n {n_k}/{n_{k+1}}$. This implies that  
$$\sup_n\sup_{k_1< \ldots<k_n}\big\|\sum_{k=1}^n e^{ 2\pi k_jx}\big\|_1/\sqrt n\ge \liminf_{n\to \infty}  { \| T_n  \|_1}/{\sqrt
n}\ge   ( {\pi}/{2} )^{1/2}.$$ 
(This provides a short proof of a result of Aistleitner  in arXiv:1211.4640.)

\end{document}